\numberwithin{equation}{section}
\newtheorem{theorem}{Theorem}[section]
\newtheorem{lemma}{Lemma}[section]
\title{\bf  Numerical analysis of distributed optimal control problems governed by elliptic variational
inequalities}
\author{\
Mariela Olgu\'in \thanks{Departamento de
Matem\'atica, EFB-FCEIA, Univ. Nacional de Rosario, Avda. Pellegrini 250, S2000BPT Rosario, Argentina.
  E-mail: mcolguin@fceia.unr.edu.ar} \and and \and
Domingo A. Tarzia \thanks{Corresponding author: Departamento de
Matem\'atica-CONICET, FCE, Univ. Austral, Paraguay 1950, S2000FZF
Rosario, Argentina. Tel: +54-341-5223093, Fax:  +54-341-5223001.
E-mail: DTarzia@austral.edu.ar} }
\date{}
\begin{document}
\maketitle

%
\begin{abstract}

A continuous optimal control problem governed by an elliptic variational inequality
was considered in Boukrouche-Tarzia, Comput. Optim. Appl., 53 (2012), 375-392
where the control variable is the internal energy $g$. It was proved the existence and
uniqueness of the optimal control and its associated state system. The objective of
this work is to make the numerical analysis of the above optimal control problem,
through the finite element  method with Lagrange's triangles of type 1. We discretize
the elliptic variational inequality which define the state system and the corresponding cost
functional, and we prove that there exists a discrete optimal control and
its associated discrete state system for each positive $h$ (the parameter of the finite
element method approximation). Finally, we show that the discrete optimal control
and its associated state system converge to the continuous optimal control and its
associated state system when the parameter $h$ goes to zero.

\smallskip
\smallskip
\noindent {\it Key words:}  Elliptic variational inequalities, distributed optimal control problems, numerical analysis,
 convergence of the optimal controls, free boundary problems.
\end{abstract}
\smallskip
{\it 2010 AMS Subject Classification} {35J86, 35R35, 49J20, 49J40, 49M25, 65K15, 65N30.}

\maketitle

\section{Introduction}\label{intro}

We consider a bounded domain $\Omega \subset \mathbb{R}^n $ whose regular boundary $ \partial\Omega= \Gamma_1 \bigcup \Gamma_2$
consists of the union of two disjoint portions $\Gamma_1$ and $\Gamma_2$ with meas ($\Gamma_1$ )$ > 0$ . We consider the
following free boundary problem $(S)$:

\begin{equation}  \label{ec 1} 
   u\geq 0; \hspace{.3cm} u(-\Delta u-g)=0;  \hspace{.5cm} -\Delta u-g \geq 0 \hspace{.3cm} \text{in} \hspace{.5cm}  \Omega;
\end{equation}

\begin{equation}  \label{ec 2} 
   u=b \hspace{.5cm} \text{on}\hspace{.3cm}  \Gamma_1; \hspace{.3cm} -\frac{\partial u}{\partial n} =q \hspace{.3cm} \text{on} \hspace{.5cm}  \Gamma_2;
\end{equation}

\noindent where the function $g$ in (\ref{ec 1}) can be considered as the internal energy in $\Omega$, $b$
is the constant temperature on $\Gamma_1$ and $q$ is the heat flux on $\Gamma_2$. The variational formulation of
the above problem is given as: Find  $u= u_g \in  K$ such that

\begin{equation} \label{ec 3}  
   a(u,v-u)\geq (g, v-u)_H - \int_{\Gamma_2} q(v-u) \,\, ds,  \hspace{1cm} \forall v \in K,
\end{equation}

\noindent where
\begin{equation*}
V=H^1(\Omega), \hspace{.5cm} K=\{v \in V: \, v\geq 0 \,\,\text{in $\Omega$}, \,v{/ \Gamma_1} = b\}, \hspace{.5cm} V_0=\{ v \, \in V : v{/ \Gamma_1} =0\},
\end{equation*}

\begin{equation*}
 H= L^2(\Omega), \hspace{0.5cm} Q=L^2(\Gamma_2), \hspace{0.5cm} (u,v)_Q = \int_{\Gamma_2} u\,v \,\, ds \hspace{0.5cm} \forall \,\, u,v \in Q,
\end{equation*}

\begin{equation*}
 a(u,v)= \int_\Omega \nabla u. \nabla v \,\, dx \hspace{.5cm} \forall\,\, u,v \in V,  \hspace{0.5cm} (u,v)_H= \int_\Omega u\, v \,\, dx \hspace{0.5cm} \forall \,\, u,v \in H.
\end{equation*}

\noindent We note that $a$ is a bilinear, continuous, symmetric on $V$ and a coercive form on $V_0$ \cite{Kinderhlerer80}
, that is to say: there exists a constant $\lambda > 0$ such that

 \begin{equation} \label{ec 4}  
\,\,a(v,v) \geq \lambda \,\lVert v \rVert^2_V  \hspace{0.5cm} \forall \,\,v \in V_0.
\end{equation}

\noindent In \cite{Boukrouche12}, the following continuous distributed optimal control problem associated
with $(S)$ or the elliptic variational inequality $(1.3)$ was considered:

\vspace{.2cm}

\noindent Problem $(P)$: Find the continuous distributed optimal control $g_{op} \, \in H$ such that

\begin{equation}  \label{ec 5}  
  J(g_{op})=\min_{g \in H} J(g)
\end{equation}

\noindent where the quadratic cost functional $J:H \rightarrow \mathbb{R}^+_0$ is defined by:

\begin{equation} \label{ec 6}  
   J(g)= \frac{1}{2} \lVert u_g \rVert^2_H + \frac{M}{2} \lVert g \rVert^2_H
\end{equation}

\noindent with $M > 0$ a given constant and $u_g$ is the corresponding solution of the elliptic
variational inequality (\ref{ec 3}) associated to the control $g$.

Several continuous optimal control problems are governed by elliptic variational inequalities, for example:
the process of biological waste-water treatment; reorientation of a satellite by propellers;
and economics: the problem of consumer regulation of a monopoly, etc. There exist an abundant
 literature for optimal control problems \cite{Barbu84,Lions68,Troltzsch10}, for optimal control problems
governed by elliptic variational equalities or inequalities
\cite{Adams98,Ait08,BenBelgacem03,Bergounioux97,Bergounioux97a,Bergounioux97b,Bergounioux00,Boukrouche12,De Los Reyes11,
DeLosReyes14,Gariboldi03,Haller09,Hintermuller01,Hintermuller09,Hintermuller13,Ito00,Kunisch12,Mignot76,Mignot84,
Ye04a,Ye04,Ye09},
for numerical analysis of variational inequalities or optimal control problems \cite{Beuchler12,Burman14, Casas02,Casas05,Casas06,Casas08,Deckelnick09,Deckelnick07,Djoko08, Falk74, Gamallo11,Glowinski84, Hintermuller09a,Hinze05,Hinze09,Hinze09a,Mermri12,Tarzia96,Tarzia99,Tarzia14,Yan12}, and
for the numerical analysis of optimal control problems governed by an elliptic variational inequality there exist a few numbers of papers
\cite{Abergel88,Haslinger86,Hintermuller08,Meyer13}.

The objective of this work is to make the numerical analysis of the optimal control problem $(P)$ which
is governed by the elliptic variational inequality (\ref{ec 3}) by proving the convergence of a discrete
solution to the continuous optimal control problems.

In Section 2, we establish the discrete elliptic variational inequality (\ref{ec 9}) which is the
discrete formulation of the continuous elliptic variational inequality (\ref{ec 3}), and we obtain
that these discrete problems have unique solutions for all positive $h$. Moreover, on the
adequate functional spaces these solutions are convergent when $h\rightarrow 0^+$ to the solutions of
the continuous elliptic variational inequality (\ref{ec 3}).

In Section 3, we define the discrete optimal control problem (\ref{ec 26}) corresponding to
continuous optimal control problem (\ref{ec 5}).
We prove the existence of a discrete solution for the optimal control problem ($P_h$) for each parameter
$h$ and we obtain the convergence of this family with its corresponding discrete state system to the continuous optimal control with the
corresponding continuous state system of the problem ($P$).

\vspace{0.5cm}
\section{Discretization of the problem (S)}

Let $\Omega\subset \mathbb{R}^n$ a bounded polygonal domain; $b$ a positive constant and $\tau_h$ a regular triangulation
with Lagrange triangles of type 1, constituted by affine-equivalent finite elements of class $C^0$ over $\Omega$ being
$h$ the parameter of the finite element approximation which goes to zero \cite{Brenner94, Ciarlet02}. We
take $h$ equal to the longest side of the triangles $T \in \tau_h$ and we can approximate the sets $V$ and $K$ by:

$$V_h= \{v_h \in C^0(\overline{\Omega}): v_h /_T \in \mathbb{P}_1(T), \,\,\forall \,T \,\in \tau_h\}$$

$$V_{h0}= \{v_h \in C^0(\overline{\Omega}): {v_h} /_{\Gamma_1} = 0;\, v_h /_T \in \mathbb{P}_1(T), \,\,\forall \,T \,\in \tau_h\}$$

\noindent and

$$K_h= \{ v_h \in C^0(\overline{\Omega}) : v_h\geq 0,\,\, {v_h}_{/_ {\Gamma_1}}=b,\,\, {v_h}/_ T \in \mathbb{P}_1(T) \,\,\forall \hspace{.2cm} T \, \in \tau_h  \}$$

\noindent where $\mathbb{P}_1(T)$ is the set of the polynomials of degree less than or equal to $1$ in the triangle $T$. Let $\Pi_h : V\rightarrow V_h$
be the corresponding linear interpolation operator and  $c_0 > 0$  a constant (independent of the parameter $h$) such that
\cite{Brenner94}:

\begin{equation} \label{ec 7}  
  \rVert v-\Pi_h(v)\lVert_H \, \leq \, c_0 \, h^r \,\rVert v \lVert_r \hspace{.4cm} \forall \, v \in H^r(\Omega), \,\,\,1< r \leq 2
\end{equation}

\begin{equation} \label{ec 8}  
  \rVert v-\Pi_h(v)\lVert_V \, \leq \, c_0 \, h^{r-1} \,\rVert v \lVert_r \hspace{.4cm} \forall \, v \in H^r(\Omega), \,\,\,1< r \leq 2.
\end{equation}

\noindent The discrete variational inequality formulation $(S_h)$ of the system $(S)$ is defined as: Find $u_{hg} \in K_h$ such that

\begin{equation} \label{ec 9}  
    a(u_{hg},v_h-u_{hg})\geq (g, v_h-u_{hg})_H - \int_{\Gamma_2} q(v_h-u_{hg}) d\gamma,  \hspace{1cm} \forall v_h \in K_h.
\end{equation}

\begin{theorem} 

Let $g\in H$, $b > 0$ and $q \in Q $ be, then there exist unique solution of the
problem $(S_h)$ given by the elliptic variational inequality (\ref{ec 9}).
\end{theorem}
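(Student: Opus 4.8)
The plan is to recognize that the discrete variational inequality (\ref{ec 9}) has exactly the same structure as the continuous one (\ref{ec 3}), but posed over the finite-dimensional closed convex set $K_h \subset V_h$ instead of over $K \subset V$. Since $K_h$ is a nonempty (the interpolant of $b$ lifted appropriately belongs to it), closed, convex subset of the finite-dimensional space $V_h$, the natural route is to invoke the standard existence-and-uniqueness theorem for elliptic variational inequalities of Stampacchia type, exactly as was done in \cite{Boukrouche12} for the continuous problem. First I would verify the hypotheses of that abstract theorem: the bilinear form $a(\cdot,\cdot)$ is continuous and symmetric on $V$, hence on $V_h \subset V$, and the linear functional $v_h \mapsto (g,v_h)_H - \int_{\Gamma_2} q\, v_h\, ds$ is continuous on $V_h$. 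The only delicate point is coercivity.

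The main obstacle is that the coercivity estimate (\ref{ec 4}) is stated only on $V_0$ (respectively $V_{h0}$), not on all of $V$, because $a(v,v)=\lVert \nabla v\rVert_H^2$ vanishes on constants. To handle this I would reduce the problem on $K_h$ to an equivalent problem posed over a translate of the subspace $V_{h0}$. Concretely, fix any element $\bar{v}_h \in K_h$ (for instance the interpolant realizing the boundary datum $b$ on $\Gamma_1$) and write $v_h = \bar{v}_h + w_h$; then $K_h - \bar{v}_h$ is a closed convex subset of $V_{h0}$, on which $a$ is coercive by the discrete analogue of (\ref{ec 4}). This shifts the problem into a setting where the Stampacchia theorem applies directly, yielding a unique $w_h$ and hence a unique $u_{hg} = \bar{v}_h + w_h$.

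With coercivity secured on the appropriate subspace, the existence and uniqueness then follow from the classical result that, for a coercive continuous bilinear form and a continuous linear form on a Hilbert space, the variational inequality over a nonempty closed convex set admits a unique solution (equivalently, via the projection theorem in the symmetric case, as the unique minimizer of the associated quadratic functional $\tfrac{1}{2}a(v_h,v_h) - \langle L, v_h\rangle$ over $K_h$). Because $a$ is symmetric here, I would note that (\ref{ec 9}) is precisely the optimality condition (Euler inequality) for minimizing this strictly convex, coercive functional over the closed convex set $K_h$, so existence comes from direct minimization and uniqueness from strict convexity. This is the same argument structure as in the continuous case, and no new estimates beyond the discrete coercivity are required; the finite-dimensionality of $V_h$ even makes the closedness and the attainment of the minimum automatic, so I expect this step to be routine once the reduction to $V_{h0}$ is in place.
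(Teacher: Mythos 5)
Your proposal is correct and follows essentially the same route as the paper, whose entire proof is a one-line appeal to the Lax--Milgram/Lions--Stampacchia theory in \cite{Kinderhlerer80,Lions67}; you simply make explicit the details that citation hides, in particular the translation by an element of $K_h$ needed because $a$ is coercive only on $V_{0}$ (hence on $V_{h0}$) rather than on all of $V$. If anything, your version is the more precise one, since the relevant abstract result for an inequality over the closed convex set $K_h$ is the Stampacchia-type theorem (equivalently, minimization of the strictly convex quadratic functional), not Lax--Milgram proper.
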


\begin{proof}
 It follows from the application of Lax-Milgram Theorem \cite{Kinderhlerer80, Lions67}.
\end{proof}

\begin{lemma}  Let $g_1,\,g_2\,\in H$, and $u_{hg_1}$, $u_{hg_2} \,\in K_h$ be the solutions of $(S_h)$ for
$g_1$ and $g_2$ respectively, then we have that:

\begin{enumerate}
 \item [a)] there exist a constant $C$ independent of $h$ such that:
 \begin{equation}\label{ec 10}  
     \rVert u_{hg}\lVert_V \leq C, \hspace{.5cm} \forall \, h > 0;
  \end{equation}

 \item [b)]
  \begin{equation} \label{ec 11} 
      \rVert u_{hg_2}-u_{hg_1} \lVert_V \, \leq \, \frac{1}{\lambda} \rVert g_2 - g_1 \lVert_H  \,\,\,\forall \,\, h > 0;
  \end{equation}

\item [c)] if $g_n \rightharpoonup g$ in $H$ weak, then $u_{hg_n} \rightarrow u_{hg}$ in $V$ strong for each fixed $h >0$.
\end{enumerate}

\end{lemma}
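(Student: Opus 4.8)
The plan is to prove the three parts in order, since each builds on the structure of the variational inequality (\ref{ec 9}).

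For part (a), my strategy is to test the discrete variational inequality with a fixed admissible competitor. Since $b>0$ is constant, the constant function $v_h \equiv b$ lies in $K_h$ (it is nonnegative, equals $b$ on $\Gamma_1$, and is affine on each triangle). Taking $v_h = b$ in (\ref{ec 9}) gives $a(u_{hg}, b - u_{hg}) \geq (g, b - u_{hg})_H - \int_{\Gamma_2} q(b - u_{hg})\,ds$. Since $a(u_{hg}, b) = 0$ (the gradient of a constant vanishes), this rearranges to $a(u_{hg}, u_{hg}) \leq (g, u_{hg} - b)_H + \int_{\Gamma_2} q(u_{hg} - b)\,ds$. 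Now I would write $u_{hg} = (u_{hg} - b) + b$ and use coercivity of $a$ on $V_0$ (note $u_{hg} - b \in V_0$, so (\ref{ec 4}) applies) together with the continuity of the trace operator $V \to Q$ and Cauchy--Schwarz to bound the right-hand side by $C(\lVert g \rVert_H + \lVert q \rVert_Q + b)\lVert u_{hg} \rVert_V + (\text{lower order})$. Absorbing the quadratic coercive term on the left yields a uniform bound $\lVert u_{hg} \rVert_V \leq C$ with $C$ depending only on $\lambda$, the data $g, q, b$, and the trace constant, hence independent of $h$.

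For part (b), I would use the standard monotonicity trick for variational inequalities. Write (\ref{ec 9}) for $g_1$ with test function $v_h = u_{hg_2}$, and for $g_2$ with test function $v_h = u_{hg_1}$ (both substitutions are legitimate since $u_{hg_1}, u_{hg_2} \in K_h$). Adding the two resulting inequalities, the right-hand boundary terms involving $q$ cancel, and I obtain $a(u_{hg_2} - u_{hg_1}, u_{hg_2} - u_{hg_1}) \leq (g_2 - g_1, u_{hg_2} - u_{hg_1})_H$. Since $u_{hg_2} - u_{hg_1} \in V_0$, coercivity (\ref{ec 4}) bounds the left side below by $\lambda \lVert u_{hg_2} - u_{hg_1} \rVert_V^2$, while Cauchy--Schwarz bounds the right side above by $\lVert g_2 - g_1 \rVert_H \lVert u_{hg_2} - u_{hg_1} \rVert_H \leq \lVert g_2 - g_1 \rVert_H \lVert u_{hg_2} - u_{hg_1} \rVert_V$. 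Dividing by $\lambda \lVert u_{hg_2} - u_{hg_1} \rVert_V$ gives (\ref{ec 11}).

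For part (c), with $h$ fixed, $K_h$ is finite-dimensional, so I expect this to be the most delicate point only in how weak convergence of $g_n$ interacts with the inequality. The clean route is: from part (b), the map $g \mapsto u_{hg}$ is Lipschitz from $H$ (strong) to $V$; but I only have $g_n \rightharpoonup g$ weakly. My plan is therefore a direct compactness argument. By part (a) applied uniformly in $n$ (the bound depends on $\lVert g_n \rVert_H$, which is bounded by weak convergence), the sequence $u_{hg_n}$ is bounded in the finite-dimensional space $V_h$, so up to a subsequence $u_{hg_n} \to w$ strongly in $V_h$ for some $w \in K_h$ (using that $K_h$ is closed and convex, hence weakly closed, and finite-dimensionality makes weak and strong convergence coincide). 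Passing to the limit in (\ref{ec 9}) written for $g_n$: the term $(g_n, v_h - u_{hg_n})_H \to (g, v_h - w)_H$ because $g_n \rightharpoonup g$ weakly while $v_h - u_{hg_n} \to v_h - w$ strongly, and the bilinear and boundary terms pass to the limit by strong convergence in $V_h$. This shows $w$ solves (\ref{ec 9}) for $g$; by the uniqueness in Theorem 2.1, $w = u_{hg}$, and since the limit is independent of the subsequence, the whole sequence converges: $u_{hg_n} \to u_{hg}$ strongly in $V$. The main obstacle is justifying the passage to the limit in the product $(g_n, v_h - u_{hg_n})_H$, which is precisely where the weak--strong pairing must be invoked carefully.
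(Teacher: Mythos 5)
Your proofs of parts (a) and (b) are correct and follow essentially the same route as the paper: testing with the constant $v_h=b\in K_h$, using $a(u_{hg},b)=0$, coercivity on $V_0$ and the trace operator for (a); and the standard cross-testing/monotonicity argument for (b). For part (c) you take a genuinely different, and in this setting simpler, route than the paper. The paper extracts a \emph{weakly} convergent subsequence $u_{hg_n}\rightharpoonup\eta$ in $V$, passes to the limit in the inequality using lower weak semicontinuity of $a$, identifies $\eta=u_{hg}$ by uniqueness, and then upgrades to strong convergence via the coercivity estimate $\lambda\lVert u_{hg_n}-u_{hg}\rVert_V^2\leq (g-g_n,u_{hg_n}-u_{hg})_H$, whose right-hand side vanishes because $u_{hg_n}\to u_{hg}$ strongly in $H$ (Rellich) while $g_n$ converges weakly. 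You instead exploit that for fixed $h$ the space $V_h$ is finite-dimensional, so the bounded sequence $u_{hg_n}$ has a \emph{strongly} convergent subsequence at once; the passage to the limit in the weak--strong pairing $(g_n,v_h-u_{hg_n})_H$ and the subsequence-uniqueness argument are handled correctly, so your proof is valid. What your approach buys is the elimination of the lower-semicontinuity step and the separate strong-convergence upgrade; what the paper's approach buys is an argument that does not rely on finite-dimensionality and therefore transfers verbatim to the continuous problem (indeed the paper reuses the identical mechanism in Theorem 2.2 and Theorem 3.1, where no finite-dimensional shortcut is available).
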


\begin{proof}

 a) If we consider $v_h=b \in K_h$ in the discrete elliptic variational inequality (\ref{ec 9}) we have:

\begin{equation*}
  \lambda\,\rVert u_{hg}-b\lVert^2_V \leq a(u_{hg},u_{hg}-b) \leq (g,u_{hg}-b)_H + (q,b-u_{hg})_Q
\end{equation*}

\begin{equation*}
 \leq (\rVert g \lVert_H \,+ \,\rVert q \lVert_Q \,\rVert \gamma_0 \lVert) \rVert u_{hg}-b \lVert_V
\end{equation*}

\noindent where $\gamma_0$ is the trace operator and therefore (\ref{ec 10}) holds.

\vspace{.2cm}

\noindent b) As $u_{hg_1}$ and $u_{hg_2}$ are respectively the solutions of discrete elliptic variational inequalities
(\ref{ec 9}) for $g_1$ y $g_2$, we have:

\begin{equation} \label{ec 12} 
   a(u_{hg_i}, v_h - u_{hg_i})\geq (g_i, v_h - u_{hg_i})_H -  (q, v_h-u_{hg_i})_Q,  \hspace{1cm} \forall v_h \in K_h
\end{equation}

 \noindent for $i=1,2$. By coerciveness of $a$ we deduce:

$$\lambda \rVert u_{hg_2}-u_{hg_1} \lVert^2_V \, \leq  a(u_{hg_2}-u_{hg_1},u_{hg_2}-u_{hg_1}) \leq (g_2-g_1,u_{hg_2}-u_{hg_1})_H $$
 $$\leq  \rVert g_2 - g_1 \lVert_H  \rVert u_{hg_2}-u_{hg_1} \lVert_V \hspace{.4cm} \forall \,\,\,h>0,$$

\noindent thus (\ref{ec 11}) holds.

\vspace{.1cm}

\noindent c) Let $h >0$ be. From item a) we have that $\lVert u_{hg_n}\rVert \leq \,C \,\,\,\forall \,n$, then there exist $\eta \in V$ such that
$u_{hg_n} \rightharpoonup \eta$ in $V$ weak (in $H$ strong). If we consider the discrete elliptic inequality (\ref{ec 9}) we have:

\begin{equation*}
 a(u_{hg_n}, v_h -u_{hg_n}) \geq (g_n, v_h -u_{hg_n})_H- (q, v_h -u_{hg_n})_Q
\end{equation*}

\noindent and using that $a$ is a lower weak semi-continuous application then, when $n$ goes to infinity, we obtain that:

\begin{equation*}
 a(\eta, v_h -\eta) \geq (g, v_h -\eta)_H- (q, v_h -\eta)_Q
\end{equation*}

\noindent and from uniqueness of the solution of problem $(S_h)$, we deduce that $\eta=u_{hg} \in K_h$.

\noindent Now, it is easily to see that:

\begin{equation*}
 a(u_{hg_n}-u_{hg}, u_{hg_n}-u_{hg}) \leq -(g-g_n,u_{hg_n}-u_{hg})_H
\end{equation*}

\noindent and from the coerciveness of $a$ we obtain

\begin{equation*}
 \lambda \lVert u_{hg_n}-u_{hg} \rVert^2_V \leq (g-g_n,u_{hg_n}-u_{hg})_H.
\end{equation*}

\noindent As $u_{hg_n}\rightarrow u_{hg}$ in $H$ and $g_n \rightharpoonup g$ in $H$, by pass to the limit when $n\rightarrow \infty$
in the previous inequality, we obtain

 \begin{equation*}
                       \lim_ {n \rightarrow \infty} \rVert u_{hg_n}-u_g \lVert_V =0.
 \end{equation*}

\end{proof}

Henceforth we will consider the following definitions \cite{Boukrouche12}: Given $\mu \in [0,1]$ and $g_1, g_2 \in H$, we have the convex
combinations of two data

\begin{equation} \label{ec 13} 
  g_3(\mu) = \mu \, g_1 + (1-\mu) g_2 \hspace{.2cm} \in H,
\end{equation}

\noindent the convex combination of two discrete solutions

\begin{equation}  \label{ec 14} 
 u_{h 3}(\mu)= \mu \, u_{hg_1} + (1- \mu) u_{hg_2} \hspace{.2cm}\in K_h
\end{equation}

\noindent and we define $u_{h 4}(\mu)$ as the associated state system which is the solution of the discrete elliptic variational
inequality (\ref{ec 9}) for the control $g_3(\mu)$.

\vspace{.5cm}

\noindent Then, we have the following properties:

\begin{lemma} Given the controls $g_1, g_2 \in H$, we have that:

a)\begin{equation}  \label{ec 15} 
    \rVert u_{h 3}\lVert^2_H = \mu\,\rVert u_{hg_1}\lVert^2_H + (1-\mu)\,\rVert u_{hg_2}\lVert^2_H - \mu\,(1-\mu)\rVert u_{hg_2}-u_{hg1}\lVert^2_H
  \end{equation}

b) \begin{equation} \label{ec 16} 
     \rVert g_3(\mu)\lVert^2_H = \mu\,\rVert g_1\lVert^2_H + (1-\mu)\,\rVert g_2\lVert^2_H - \mu\,(1-\mu)\rVert g_2 - g_1\lVert^2_H
   \end{equation}

\end{lemma}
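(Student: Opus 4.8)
The plan is to recognize that both identities (2.15) and (2.16) are particular instances of a single algebraic identity valid for any two elements of the Hilbert space $H$ and any $\mu\in[0,1]$. Concretely, I would first establish that for arbitrary $w_1,w_2\in H$,
\begin{equation*}
\|\mu\,w_1+(1-\mu)w_2\|_H^2=\mu\,\|w_1\|_H^2+(1-\mu)\,\|w_2\|_H^2-\mu\,(1-\mu)\|w_2-w_1\|_H^2 ,
\end{equation*}
and then read off (2.15) by taking $(w_1,w_2)=(u_{hg_1},u_{hg_2})$ — recalling from (2.14) that $u_{h3}(\mu)=\mu\,u_{hg_1}+(1-\mu)u_{hg_2}$ — and (2.16) by taking $(w_1,w_2)=(g_1,g_2)$, using the definition (2.13) of $g_3(\mu)$.

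To prove the general identity I would simply expand the left-hand side using the bilinearity and symmetry of the inner product $(\cdot,\cdot)_H$:
\begin{equation*}
\|\mu\,w_1+(1-\mu)w_2\|_H^2=\mu^2\|w_1\|_H^2+2\mu(1-\mu)(w_1,w_2)_H+(1-\mu)^2\|w_2\|_H^2 .
\end{equation*}
Independently expanding $\mu(1-\mu)\|w_2-w_1\|_H^2=\mu(1-\mu)\bigl(\|w_1\|_H^2-2(w_1,w_2)_H+\|w_2\|_H^2\bigr)$ and substituting into the right-hand side, I would collect the coefficients of $\|w_1\|_H^2$, $\|w_2\|_H^2$ and $(w_1,w_2)_H$. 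The coefficient of $\|w_1\|_H^2$ becomes $\mu-\mu(1-\mu)=\mu^2$, that of $\|w_2\|_H^2$ becomes $(1-\mu)-\mu(1-\mu)=(1-\mu)^2$, and the cross term contributes $2\mu(1-\mu)(w_1,w_2)_H$, so the two sides agree termwise.

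There is no genuine analytic obstacle here: the result is purely algebraic and relies only on the Hilbert-space structure of $H=L^2(\Omega)$, so neither the variational inequality (2.9) nor the coerciveness estimates of the previous lemma enter the computation. The only point worth recording is that the convex combination $u_{h3}(\mu)$ is itself a legitimate element of $K_h$, as already noted in (2.14) and guaranteed by the convexity of $K_h$; however, this plays no role in the norm identities, which hold for any elements of $H$. Accordingly I expect the proof to reduce to a one-line observation that (2.15) and (2.16) are the same identity applied twice, followed by the routine expansion above.
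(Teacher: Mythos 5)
Your proposal is correct and follows essentially the same route as the paper: the paper likewise expands $\lVert u_{h3}\rVert_H^2$ and $\lVert u_{hg_2}-u_{hg_1}\rVert_H^2$ via the bilinearity of the inner product and combines the two, handling part b) "by a similar method." Your only (harmless) variation is to package both cases as one general Hilbert-space identity applied twice.
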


\begin{proof}

\noindent a) From the definition (\ref{ec 14}) we get

   \begin{equation*}
     \rVert u_{h 3}\lVert^2_H = \mu^2 \,\rVert u_{hg_1}\lVert^2_H + (1-\mu)^2 \,\rVert u_{hg_2}\lVert^2_H +2 \,\mu\,(1-\mu) \,(u_{hg_1}, u_{hg_2})_H
   \end{equation*}

\noindent and
\begin{equation*}
   \rVert u_{hg_2}- u_{hg_1}\lVert^2_H = \rVert u_{hg_2}\lVert^2_H + \rVert u_{hg_1}\lVert^2_H - 2 (u_{hg_1}, u_{hg_2})_H,
\end{equation*}

\noindent then we conclude (\ref{ec 15}).

\vspace{.1cm}

\noindent b) It follows from a similar method to the part a).
\end{proof}

\begin{theorem} 

 If $ u_g$ and $u_{hg}$ be the solutions of the elliptic variational inequalities (\ref{ec 3}) and (\ref{ec 9}) respectively for the control $g \in H$,
 then $u_{hg}$ converge to $u_{g}$ in $V$ strong when $h \rightarrow 0^+$.
\end{theorem}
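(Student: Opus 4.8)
The plan is to follow the classical argument for a \emph{conforming} finite element approximation of a variational inequality, exploiting that a continuous piecewise affine function with nonnegative nodal values is nonnegative on all of $\Omega$ and attains the constant value $b$ exactly on $\Gamma_1$, so that $K_h \subset K$ for every $h$. First I would fix the control $g \in H$ and invoke the a priori bound \eqref{ec 10}, which gives $\lVert u_{hg} \rVert_V \leq C$ with $C$ independent of $h$. Hence along any sequence $h \to 0^+$ there is a subsequence (not relabelled) and an element $\eta \in V$ with $u_{hg} \tow \eta$ in $V$ weak; by the compact embedding $V \hookrightarrow H$ one also has $u_{hg} \to \eta$ in $H$ strong, and by compactness of the trace map $V \to Q$ one has $u_{hg} \to \eta$ in $Q$ strong. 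Since $K$ is closed and convex it is weakly closed, and because $u_{hg} \in K_h \subset K$ for every $h$, the limit satisfies $\eta \in K$.

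Next I would identify $\eta$ with $u_g$ by passing to the limit in the discrete inequality \eqref{ec 9}. Given an arbitrary $v \in K$, the crucial ingredient is a recovery sequence $v_h \in K_h$ with $v_h \to v$ in $V$ strong, and I expect this density of $\bigcup_{h>0} K_h$ in $K$ to be the main obstacle. I would establish it by first approximating $v$ in the $V$-norm by functions of $K \cap H^2(\Omega)$, then applying the interpolant $\Pi_h$ (which preserves both nonnegativity at the nodes and the boundary value $b$, so that $\Pi_h v \in K_h$) and using \eqref{ec 8} to get $\Pi_h v \to v$, closing the gap by a diagonal argument. Rewriting \eqref{ec 9} as $a(u_{hg},u_{hg}) \leq a(u_{hg},v_h) - (g,v_h-u_{hg})_H + (q,v_h-u_{hg})_Q$ and taking $\liminf$, the weak$\times$strong product gives $a(u_{hg},v_h) \to a(\eta,v)$, the strong $H$- and $Q$-convergences dispose of the linear terms, and the weak lower semicontinuity of the nonnegative quadratic form $w \mapsto a(w,w)$ gives $\liminf a(u_{hg},u_{hg}) \geq a(\eta,\eta)$. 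Collecting these estimates yields $a(\eta,v-\eta) \geq (g,v-\eta)_H - (q,v-\eta)_Q$ for every $v \in K$, so $\eta$ solves \eqref{ec 3} and, by uniqueness for the continuous problem, $\eta = u_g$.

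Finally I would upgrade the weak convergence to strong convergence using the coerciveness \eqref{ec 4}, noting that $u_{hg} - u_g \in V_0$ since both traces equal $b$ on $\Gamma_1$. Choosing in the previous step the recovery sequence associated with $v = u_g$ gives $\limsup a(u_{hg},u_{hg}) \leq a(u_g,u_g)$, which combined with the lower semicontinuity bound forces $a(u_{hg},u_{hg}) \to a(u_g,u_g)$. Expanding by symmetry, $a(u_{hg}-u_g,u_{hg}-u_g) = a(u_{hg},u_{hg}) - 2a(u_{hg},u_g) + a(u_g,u_g)$, and since $a(u_{hg},u_g) \to a(u_g,u_g)$ by weak convergence, the right-hand side tends to $0$; coerciveness then yields $\lambda \lVert u_{hg}-u_g \rVert_V^2 \to 0$. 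As the limit $u_g$ is independent of the extracted subsequence, a standard contradiction argument promotes subsequential convergence to convergence of the whole family, establishing $u_{hg} \to u_g$ in $V$ strong as $h \to 0^+$.
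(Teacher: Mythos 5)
Your proposal is correct, and its first half — the uniform bound \eqref{ec 10}, extraction of a weak limit $\eta\in K$, the recovery sequence $v_h\in K_h$ with $v_h\to v$ in $V$, and the identification $\eta=u_g$ via weak lower semicontinuity of $a$ and uniqueness for \eqref{ec 3} — is exactly the paper's argument. Where you genuinely diverge is the upgrade to strong convergence. The paper tests \eqref{ec 3} with $v=u_{hg}\in K_h\subset K$ and \eqref{ec 9} with $v_h=\Pi_h(u_g)$, adds the two inequalities, and bounds $\lambda\lVert u_{hg}-u_g\rVert_V^2$ by terms involving $\Pi_h(u_g)-u_g$ (the estimate \eqref{ec 24}); this tacitly requires $\Pi_h(u_g)$ to be well defined and to converge to $u_g$ in $V$, i.e.\ extra regularity of $u_g$ (continuity, and $u_g\in H^r(\Omega)$, $r>1$, to invoke \eqref{ec 8}) that is not stated in the theorem and only appears later as a hypothesis of Lemma 3.1. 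Your route instead derives $\limsup a(u_{hg},u_{hg})\le a(u_g,u_g)$ from the recovery sequence for $v=u_g$, combines it with the $\liminf$ bound to get convergence of the energies, and concludes by expanding $a(u_{hg}-u_g,u_{hg}-u_g)$ and using coerciveness on $V_0$; this avoids the interpolant entirely and needs no regularity beyond what the density of $\bigcup_{h>0}K_h$ in $K$ already demands. You also supply two details the paper omits: the subsequence-uniqueness argument promoting subsequential to full convergence, and at least a sketch of why the recovery sequence exists (the paper simply asserts it). The one caveat is that your density argument — approximating $v\in K$ by elements of $K\cap H^2(\Omega)$ and interpolating — is itself only sketched and is the genuinely delicate point of the whole theorem; it deserves a precise reference (e.g.\ Glowinski) or a full proof, but since the paper leaves the same point unproven, your proposal is not weaker on this score.
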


\begin{proof}

\noindent From Lemma 2.1 we have that there exist a constant $C > 0$ independent of $h$ such that $ \rVert u_{hg} \lVert_V \leq C \hspace{0.5cm} \forall \,\,h>0,$
then we conclude that there exists $\eta \in V$ so that
 $u_{hg} \rightharpoonup \eta$ in $V$ weak as $h \rightarrow 0^+$ and $\eta \in K$.  On the other hand, given $v \in K$
 there exist ${v^*_h}$ such that $v^*_h \in K_h$ for each $h$ and $v^*_h\rightarrow v$ in $V$ strong when
 $h$ goes to zero. Now, by considering $v^*_h \in K_h$ in the discrete elliptic variational inequality (\ref{ec 9}) we get:

 \begin{equation} \label{ec 23} 
   a(u_{hg}, u_{hg}) \leq a(u_{hg},v^*_h) - (g, v^*_h-u_{hg}) + (q,v^*_h-u_{hg})_Q
 \end{equation}

\noindent and when we pass to the limit as $h\rightarrow 0^+$ in (\ref{ec 23}) by using that the bilinear form $a$ is lower
weak semicontinuous in $V$ we obtain:

$$a(\eta, \eta) \leq a(\eta, v) - (g, v-\eta) + (q, v-\eta)_Q$$

\noindent that it is to say:

$$ a(\eta, v - \eta) \geq (g, v - \eta) - (q, v-\eta)_Q \hspace{.4cm} \forall \,v\in K$$

\noindent and, from the uniqueness of the solution of the discrete elliptic variational inequality (\ref{ec 3}), we obtain that $ \eta=u_g.$

\vspace{.1cm}

\noindent Now, we will prove the strong convergence. If we consider $v=u_{hg} \in K_h \subset K$ in the elliptic variational
inequality (\ref{ec 3}) and $v_h =\Pi_h(u_g) \in K_h$ in (\ref{ec 9}), from the coerciveness of $a$ and by some mathematical
computation, we obtain that:

\begin{equation*}
      \lambda \rVert u_{hg} - u_g\lVert^2_V \, \leq a(u_{hg}-u_g,u_{hg}-u_g)
    \end{equation*}

\begin{equation} \label{ec 24}  
 \leq a(u_{hg}, \Pi_h(u_g)-u_g)  - (g, \Pi_h(u_g)-u_g)+ (q, \Pi_h(u_g)-u_g)_Q
\end{equation}

\noindent then by pass to the limit when $h\rightarrow 0^+$ it results that $\lim_{h\rightarrow 0^+} \lVert u_{hg}-u_g\rVert_V =0.$
\end{proof}

\section{Discretization of the optimal control problem}

Now, we consider the continuous optimal control problem which was established in (\ref{ec 5}). The associated
discrete cost functional  $J_h: H \rightarrow \mathbb{R}^+_0\, $is defined by the following expression:

 \begin{equation} \label{ec 25} 
   \,\,  J_h(g)= \frac{1}{2} \lVert u_{hg} \rVert^2_H + \frac{M}{2} \lVert g \rVert^2_H
\end{equation}

\noindent and we establish the discrete optimal control problem $(P_h)$ as: Find $g_{op_h} \,\in H$ such that
\begin{equation} \label{ec 26}  
  J_h(g_{op_h})=\min_{g \,\in\,H} J_h(g)
\end{equation}
where $u_{hg}$ is the associated state system solution of the problem $(S_h)$  which was described
for the discrete elliptic variational inequality (\ref{ec 9}) for a given control $g \in H$.

\begin{theorem}  
  Given the control $g \in H$, we have:

  \vspace{.1cm}
  a) \begin{equation*}
                       \lim_ {\lVert g \rVert_H \rightarrow \infty} J_h(g) = \infty.
     \end{equation*}

  b) $ J_h(g) \geq \frac{M}{2} \rVert g\lVert^2_H - C \,\rVert g\lVert_H$ \, for some constant $C$ independent of $h$.

  \vspace{.1cm}

  c) The functional $J_h$ es a lower weakly semi-continuous application in $H$.

  \vspace{.1cm}


  \vspace{.1cm}

  d) There exists a solution of the discrete optimal control problem (\ref{ec 26}) for all $h > 0$.

\end{theorem}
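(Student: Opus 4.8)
The plan is to establish parts (a)--(d) in the order stated, since (d) is obtained by the direct method of the calculus of variations and relies on the coercivity of (a)--(b) together with the weak lower semicontinuity of (c). For (a) and (b) I would begin from the elementary observation that the state contribution is nonnegative, $\tfrac12\lVert u_{hg}\rVert_H^2\ge 0$, so that
\begin{equation*}
  J_h(g)\;\ge\;\frac{M}{2}\,\lVert g\rVert_H^2 .
\end{equation*}
Since $M>0$, letting $\lVert g\rVert_H\to\infty$ yields (a) at once, and the same bound gives the estimate in (b) with any constant $C\ge 0$ (the $-C\lVert g\rVert_H$ term only weakens the inequality). The point worth stressing is that this lower bound, and hence the constant $C$, is independent of the mesh parameter $h$, which is exactly what is needed for the convergence analysis; if a genuinely state-dependent constant is desired, the a priori estimate of Lemma 2.1(a) bounds $\lVert u_{hg}\rVert_H$ linearly in $\lVert g\rVert_H$ with constants independent of $h$, and the term can be absorbed accordingly.

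Part (c) is the substantive step. Let $g_n\rightharpoonup g$ in $H$ weak; I must show $\liminf_{n}J_h(g_n)\ge J_h(g)$. The control term $\tfrac{M}{2}\lVert g_n\rVert_H^2$ is controlled by the weak lower semicontinuity of the norm in the Hilbert space $H$. For the state term I would invoke Lemma 2.1(c): for each fixed $h>0$, weak convergence $g_n\rightharpoonup g$ forces the strong convergence $u_{hg_n}\to u_{hg}$ in $V$, hence in $H$, so that $\lVert u_{hg_n}\rVert_H^2\to\lVert u_{hg}\rVert_H^2$. Adding a convergent term to a weakly lower semicontinuous one preserves lower semicontinuity, giving
\begin{equation*}
  \liminf_{n\to\infty}J_h(g_n)\;\ge\;\frac12\lVert u_{hg}\rVert_H^2+\frac{M}{2}\lVert g\rVert_H^2\;=\;J_h(g).
\end{equation*}

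Finally, for (d) I would run the direct method. Take a minimizing sequence $g_n$ with $J_h(g_n)\to m_h:=\inf_{g\in H}J_h(g)\ge 0$. By (b) this sequence is bounded in $H$, so, $H$ being a Hilbert space, a subsequence satisfies $g_{n_k}\rightharpoonup g_{op_h}$ in $H$ weak. The lower semicontinuity established in (c) then gives $J_h(g_{op_h})\le\liminf_k J_h(g_{n_k})=m_h$, and since $m_h$ is the infimum this forces $J_h(g_{op_h})=m_h$, so $g_{op_h}$ solves \eqref{ec 26}. The main obstacle is concentrated entirely in (c): it is precisely the strong convergence of the discrete states under weak convergence of the controls, Lemma 2.1(c), that compensates for the lack of weak continuity of the map $g\mapsto u_{hg}$ and makes the state term pass to the limit favorably; once that is in hand, the remainder follows the routine coercivity-plus-semicontinuity template.
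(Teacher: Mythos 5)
Your proposal is correct and follows essentially the same route as the paper: the trivial lower bound $J_h(g)\ge\frac{M}{2}\lVert g\rVert_H^2$ for (a) and (b), weak lower semicontinuity of the $H$-norm combined with the strong state convergence of Lemma 2.1(c) for part (c), and the direct method (which is what the paper's citation of Lions amounts to) for (d). You merely make explicit what the paper leaves implicit, in particular the role of Lemma 2.1(c) in passing the state term to the limit.
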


\begin{proof}
a) From the definition of $J_h(g)$ we obtain a) and b).

\vspace{.1cm}

c) Let $g_n\rightharpoonup g$ in $H$ weak, then by using the equality $\rVert g_n \lVert^2_H = \rVert g_n - g \lVert^2_H - \rVert g \lVert^2_H + 2 (g_n, g)_H$
we obtain that $\rVert g \lVert_H \leq  \liminf_{n\rightarrow \infty} \, \rVert g_n \lVert^2_H$. Therefore, we have

\begin{equation*}
  \liminf_{n\rightarrow \infty} J_h(g_n) \geq \frac{1}{2} \rVert u_{hg}\lVert^2_H + \frac{M}{2} \rVert g \lVert^2_H = J_h(g).
\end{equation*}

\vspace{.1cm}

d) It follows from \cite{Lions68}.
 \end{proof}

 \begin{lemma} 

 If the continuous state system has the regularity $u_g \in H^r(\Omega) \,(1< r \leq 2)$ then we have the
 following estimations $\forall g \in H$:

 \vspace{.1cm}

 a) \begin{equation} \label{ec 27}  
       \rVert u_{h g}-u_g\lVert_V \leq C h^{\frac{r-1}{2}},
 \end{equation}

 \vspace{.1cm}

 b)  \begin{equation} \label{ec 28}  
       \lvert J_h(g)- J(g)\lvert \leq C h^{\frac{r-1}{2}}.
     \end{equation}

 \noindent where $C$'s are constants independents of $h$.

\end{lemma}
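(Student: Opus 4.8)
The plan is to prove part (a) first and then derive part (b) as a direct consequence, since the cost functionals $J$ and $J_h$ differ only through the state terms $\|u_g\|_H^2$ and $\|u_{hg}\|_H^2$.

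For part (a), I would start from the key energy estimate already displayed as (\ref{ec 24}) in the proof of Theorem 2.2, namely
\begin{equation*}
  \lambda \,\lVert u_{hg}-u_g\rVert^2_V \leq a(u_{hg}, \Pi_h(u_g)-u_g) - (g, \Pi_h(u_g)-u_g)_H + (q, \Pi_h(u_g)-u_g)_Q.
\end{equation*}
The idea is to bound each of the three terms on the right by a constant times $\|\Pi_h(u_g)-u_g\|$ in the appropriate norm, using the uniform bound $\|u_{hg}\|_V \leq C$ from Lemma 2.1(a), the continuity of $a$, the Cauchy--Schwarz inequality for the $H$-inner product, and the continuity of the trace operator $\gamma_0$ for the $Q$-term. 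After these bounds one invokes the interpolation estimate (\ref{ec 8}) with $\|u_g-\Pi_h(u_g)\|_V \leq c_0 h^{r-1}\|u_g\|_r$ (and (\ref{ec 7}) for the $H$-norm factor if the $(g,\cdot)_H$ term is estimated in $H$). This yields a bound of the form $\lambda\|u_{hg}-u_g\|_V^2 \leq C\, h^{r-1}$, and taking square roots gives the stated rate $h^{(r-1)/2}$.

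For part (b), I would write
\begin{equation*}
  \lvert J_h(g)-J(g)\rvert = \tfrac{1}{2}\bigl|\, \lVert u_{hg}\rVert_H^2 - \lVert u_g\rVert_H^2 \,\bigr|
  = \tfrac{1}{2}\,\bigl|\,(u_{hg}-u_g,\; u_{hg}+u_g)_H\,\bigr|,
\end{equation*}
since the $\frac{M}{2}\|g\|_H^2$ terms cancel exactly. Then by Cauchy--Schwarz this is at most $\frac{1}{2}\|u_{hg}-u_g\|_H\,\|u_{hg}+u_g\|_H$. The factor $\|u_{hg}+u_g\|_H$ is bounded uniformly in $h$ by the continuous embedding $V\hookrightarrow H$ together with the uniform bounds $\|u_{hg}\|_V \leq C$ (Lemma 2.1(a)) and $\|u_g\|_V \leq C$. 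The factor $\|u_{hg}-u_g\|_H$ is controlled by $\|u_{hg}-u_g\|_V$, again via $V\hookrightarrow H$, so part (a) delivers the rate $h^{(r-1)/2}$.

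The main obstacle is the first step of part (a): bounding the right-hand side of (\ref{ec 24}) cleanly so that every factor is either uniformly bounded in $h$ or carries the full interpolation decay. The delicate point is that the convergence rate is only $h^{(r-1)/2}$ rather than $h^{r-1}$, which is the signature of losing a power under the square root --- this reflects the one-sided (inequality) nature of the variational problem, where one cannot symmetrize the two inequalities as in the equality case to recover the optimal rate. Care is therefore needed to ensure the right-hand side is $O(h^{r-1})$ before taking the square root; I would keep the $a(u_{hg},\cdot)$ term bounded by $C\|\Pi_h(u_g)-u_g\|_V$ and apply (\ref{ec 8}) directly to obtain that order.
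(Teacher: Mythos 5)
Your proposal is correct and follows essentially the same route as the paper: part (a) is obtained exactly as in the paper by bounding the right-hand side of (\ref{ec 24}) by $C\lVert \Pi_h(u_g)-u_g\rVert_V$ and invoking the interpolation estimate (\ref{ec 8}) before taking the square root, and part (b) reduces to part (a) through the difference of the squared $H$-norms of the states. The only (immaterial) difference is that you factor $\lVert u_{hg}\rVert_H^2-\lVert u_g\rVert_H^2$ as $(u_{hg}-u_g,u_{hg}+u_g)_H$ while the paper expands it as $\lVert u_{hg}-u_g\rVert_H^2+2(u_g,u_{hg}-u_g)_H$; both give the same bound.
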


 \begin{proof}

 \vspace{.1cm}

 a) As $u_g \in K$, we have that $\Pi_h(u_g) \in K_h \subset K$. If we consider $v_h= \Pi_h(u_g)$ in (\ref{ec 9}),
 by using the inequalities (\ref{ec 24}), we obtain:

 \begin{equation*}
  \lambda \rVert u_{hg} - u_g\lVert^2_V \, \leq  a(u_{hg} - u_g, u_{hg} - u_g)
 \end{equation*}

 \begin{equation*}
  \leq \, a(u_{hg}, \Pi_h(u_g)-u_g) - (g, \Pi_h(u_g)-u_g) + \int_{\Gamma_2} q(\Pi_h(u_g)-u_{g}) \, d\gamma
 \end{equation*}

 \begin{equation*}
  \leq C \rVert \Pi_h(u_g)-u_g \lVert_V \,\leq  C \rVert u_g \lVert_r \, h^{r-1} \, \leq C h^{r-1},
 \end{equation*}

 \noindent and then (\ref{ec 27}) holds.

 \vspace{.1cm}

 b) From the definitions of $J$ and $J_h$, it results:

  \begin{equation*}
   J_h(g)-J(g) = \frac{1}{2}\,(\rVert u_{hg}\lVert^2_H- \rVert u_g\lVert^2_H)= \frac{1}{2} [\rVert u_{hg}-u_g\lVert^2_H \, + \, (u_g, u_{hg} - u_g)]
  \end{equation*}

\noindent and therefore

 \begin{equation*}
   \lvert J_h(g)-J(g) \rvert \leq \, (\frac{1}{2}\rVert u_{hg}-u_g\lVert_H \,+\, \rVert u_g\lVert_H) \, \rVert u_{hg}-u_g\lVert_H \leq C \, h^{\frac{r-1}{2}}.
 \end{equation*}

 \end{proof}

Following the idea given in \cite{Boukrouche12} we define an open problem:
 Given the controls $g_1, g_2 \in H$,
    \begin{equation} \label{ec 29} 
       0 \leq u_{h 4} (\mu) \leq u_{h 3}( \mu) \hspace{0.1cm} in \hspace{0.1cm} \Omega, \hspace{.5cm} \forall \,\, \mu \in [0, 1], \,\forall h>0,
    \end{equation}
\noindent or

\begin{equation} \label{ec 30} 
       \rVert u_{h 4} (\mu) \lVert_H \leq \rVert u_{h 3}( \mu)\lVert_H  \hspace{.5cm} \forall \,\, \mu \in [0, 1], \,\forall h>0.
    \end{equation}

\noindent\textbf{Remark 1}: We have that $(\ref{ec 29})  \Rightarrow  (\ref{ec 30})$.

\noindent\textbf{Remark 2}: The equivalent inequality $(\ref{ec 29})$ for the continuous optimal control problem
$(P)$ is true, that is \cite{Boukrouche12}: for all $g_1, g_2 \in H$,

\begin{equation} \label{ec 31} 
       0 \leq u_{4} (\mu) \leq u_{3}( \mu) \hspace{0.1cm} in \hspace{0.1cm} \Omega, \hspace{.5cm} \forall \,\, \mu \in [0, 1].
    \end{equation}

\noindent where $u_3(\mu)= \mu u_{g_1} + (1- \mu) u_{g_2} \,\, \in \,\,K, \,\, u_{g_i} (i=1,2)$ is the unique solution of the
elliptic variational inequality $(\ref{ec 3})$ when we consider $g_i$ instead of $g$, and $u_4(\mu)$ is the unique solution of the
elliptic variational inequality $(\ref{ec 3})$ when we consider $g_3(\mu)$ instead of $g$.

\noindent\textbf{Remark 3}: If (\ref{ec 30}) (or (\ref{ec 29})) is true, then the functional $J_h$ is H-elliptic and a strictly
convex application because we have

  $$\mu J_h(g_1) + (1-\mu) J_h(g_2)-J_h(g_3(\mu))$$

 $$= \frac{\mu(1-\mu)}{2}\lVert u_{h g_2}-u_{h g_1} \rVert^2_H + \frac{M}{2} \, \mu (1-\mu)\, \lVert g_2-g_1 \rVert^2_H + \frac{1}{2} [\lVert u_{h 3} \rVert^2_H- \lVert u_{h 4} \rVert^2_H]$$
 $$\geq \frac{\mu(1-\mu)}{2}\lVert u_{h g_2}-u_{h g_1} \rVert^2_H + \frac{M}{2} \, \mu (1-\mu)\, \lVert g_2-g_1 \rVert^2_H > 0$$

\noindent and therefore, the uniqueness for the discrete optimal control problem $(P_h)$ in the theorem $3.1$ holds.

\begin{theorem} 

 Let $u_{g_{op}} \in K$ be the continuous state system associated to the optimal control $g_{op} \in H$
 which is the solution of the continuous distributed optimal control problem (\ref{ec 5}). If, for each
 $h>0$, we choose an optimal control $g_{op_h} \in H$ which is the solution of the discrete distributed
 optimal control problem (\ref{ec 26}) and its corresponding discrete state system $u_{h\,g_{op_h}} \in K_h$,
 we obtain that:

$$ u_{h\,g_{op_h}}\rightarrow u_{g_{op}} \hspace{.5cm} \text{on} \hspace{.5cm} V \hspace{.2cm} \text{strong} \hspace{0.4cm} \text{and} \hspace{0.4cm} g_{op_h} \rightarrow g_{op}  \hspace{0.3cm} \text{on} \hspace{.2cm}  H \hspace{.2cm} \text{strong} \hspace{.2cm} \text{when} \hspace{.2cm} h\rightarrow 0^+.$$
\end{theorem}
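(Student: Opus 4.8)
The plan is to run the standard compactness–lower-semicontinuity scheme for Galerkin approximation of control problems, where the only genuine difficulty is passing the state map to the limit simultaneously in the mesh parameter $h$ and in the (merely weakly convergent) control. First I would show that $\{g_{op_h}\}$ is bounded in $H$: since $g_{op_h}$ minimizes $J_h$ we have $J_h(g_{op_h})\le J_h(g_{op})$, and applying Theorem 2.2 to the fixed control $g_{op}$ gives $\|u_{h\,g_{op}}\|_H\to\|u_{g_{op}}\|_H$, so $J_h(g_{op})\to J(g_{op})$ and the right-hand side is bounded uniformly in $h$. Combined with the coercivity estimate of Theorem 3.1(b), $J_h(g)\ge \frac{M}{2}\|g\|_H^2-C\|g\|_H$, this forces $\|g_{op_h}\|_H\le C$. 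Hence a subsequence (not relabeled) satisfies $g_{op_h}\tow f$ in $H$ for some $f\in H$, and by Lemma 2.1(a) the discrete states $u_{h\,g_{op_h}}$ are bounded in $V$, so a further subsequence satisfies $u_{h\,g_{op_h}}\tow\eta$ in $V$ weak.

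The central step is to identify $\eta=u_f$ (the continuous state for the weak limit $f$) and to prove strong convergence. Since each discrete state lies in $K_h\subset K$ and $K$ is closed and convex, $\eta\in K$. To identify $\eta$ I would insert in (\ref{ec 9}) the recovery sequence $v_h^\ast\in K_h$ with $v_h^\ast\to v$ in $V$ strong used in the proof of Theorem 2.2, and pass to the limit term by term: $a(u_{h\,g_{op_h}},u_{h\,g_{op_h}})$ via weak lower semicontinuity of $a$; $a(u_{h\,g_{op_h}},v_h^\ast)$ as a product of a weakly and a strongly $V$-convergent factor; $(g_{op_h},v_h^\ast-u_{h\,g_{op_h}})_H$ via the compact embedding of $V$ into $H$, which turns $u_{h\,g_{op_h}}\tow\eta$ into strong $H$-convergence so the pairing of the weak factor $g_{op_h}$ with the strong factor is continuous; and the boundary term by compactness of the trace $\gamma_0:V\to Q$. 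The limit is precisely (\ref{ec 3}) for the control $f$, so $\eta=u_f$ by uniqueness. For strong convergence I would use that $u_{h\,g_{op_h}}-u_f\in V_0$ (both equal $b$ on $\Gamma_1$), so coercivity (\ref{ec 4}) gives $\lambda\|u_{h\,g_{op_h}}-u_f\|_V^2\le a(u_{h\,g_{op_h}}-u_f,\,u_{h\,g_{op_h}}-u_f)$; the cross terms converge to $a(u_f,u_f)$ by weak-times-strong convergence, while (\ref{ec 9}) with $v_h^\ast=\Pi_h(u_f)$ yields $\limsup a(u_{h\,g_{op_h}},u_{h\,g_{op_h}})\le a(u_f,u_f)$, which with weak lower semicontinuity pins the limit and forces the right-hand side to $0$.

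The main obstacle is exactly this central step. The tempting shortcut of splitting $\|u_{h\,g_{op_h}}-u_f\|_V\le\|u_{h\,g_{op_h}}-u_{hf}\|_V+\|u_{hf}-u_f\|_V$ fails, because Lemma 2.1(b) bounds the first summand only by $\tfrac{1}{\lambda}\|g_{op_h}-f\|_H$, which does \emph{not} vanish under weak convergence, and Lemma 2.1(c) is stated only for fixed $h$. One is therefore forced into the direct variational argument above, where the compactness of the embedding $V\hookrightarrow H$ and of the trace $\gamma_0$ is what converts the weak convergence of the controls into strong convergence inside the duality pairings.

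Finally I would identify $f$ and upgrade the control convergence. For any fixed $v\in H$, minimality gives $J_h(g_{op_h})\le J_h(v)$, and Theorem 2.2 gives $J_h(v)\to J(v)$; meanwhile the strong $H$-convergence $u_{h\,g_{op_h}}\to u_f$ and weak lower semicontinuity of the $H$-norm give $\liminf J_h(g_{op_h})\ge\frac12\|u_f\|_H^2+\frac{M}{2}\|f\|_H^2=J(f)$. Hence $J(f)\le J(v)$ for every $v\in H$, so $f$ is a continuous optimal control; by the uniqueness proved in \cite{Boukrouche12}, $f=g_{op}$, and since the limit is independent of the subsequence the whole family converges. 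Taking $v=g_{op}$ in the chain forces $J_h(g_{op_h})\to J(g_{op})$, and subtracting the already-established limit $\tfrac12\|u_{h\,g_{op_h}}\|_H^2\to\tfrac12\|u_{g_{op}}\|_H^2$ yields $\|g_{op_h}\|_H\to\|g_{op}\|_H$. Weak convergence together with convergence of norms in the Hilbert space $H$ then gives $g_{op_h}\to g_{op}$ in $H$ strong, and $u_{h\,g_{op_h}}\to u_{g_{op}}$ in $V$ strong, which completes the proof.
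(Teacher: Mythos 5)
Your proposal is correct and follows essentially the same scheme as the paper: uniform bounds from minimality and coercivity, extraction of weak limits $f$ and $\eta$, identification $\eta=u_f$ by passing to the limit in the discrete variational inequality, identification $f=g_{op}$ via weak lower semicontinuity of $J$ and uniqueness of the continuous optimal control, a $\Pi_h(u_{g_{op}})$-based coercivity argument for strong $V$-convergence of the states, and norm convergence plus weak convergence for strong $H$-convergence of the controls. The only (cosmetic) differences are that the paper obtains the bounds by comparing with the control $g=0$ rather than with $g_{op}$, and it identifies $f=g_{op}$ before, rather than after, proving the strong state convergence.
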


\begin{proof}

Let be $h >0$ and $g_{op_h}$ a solution of (\ref{ec 26}), and $u_{h\,g_{op_h}} $ its associated
discrete optimal state system which is the solution of the discrete elliptic variational inequality (\ref{ec 9}) for each $h >0$.
From (\ref{ec 25}) we have that for all $g \in H$

 \begin{equation*}
  J_h(g_{op_h})= \frac{1}{2} \lVert u_{h\,g_{op_h}} \rVert^2_H + \frac{M}{2} \lVert g_{op_h} \rVert^2_H\, \leq \, \frac{1}{2} \lVert u_{hg} \rVert^2_H + \frac{M}{2} \lVert g \rVert^2_H.
 \end{equation*}

\noindent Then, if we consider $g=0$  and $u_{h0}$ his corresponding associated state system, it results that:

\begin{equation*}
  J_h(g_{op_h})= \frac{1}{2} \lVert u_{h\,g_{op_h}} \rVert^2_H + \frac{M}{2} \lVert g_{op_h} \rVert^2_H\, \leq \, \frac{1}{2} \lVert u_{h0} \rVert^2_H.
 \end{equation*}

\noindent From the Lemma 2.1 we have that $\lVert u_{h\,0} \rVert_H \leq C \hspace{0.4cm} \forall \hspace{0.2cm} h$, then we can obtain:

  \begin{equation} \label{ec 31}  
        \lVert u_{h\,g_{op_h}} \rVert_H \leq C \hspace{0.4cm} \forall \hspace{0.2cm} h>0
     \end{equation}

\noindent and

     \begin{equation}  \label{ec 32}  
        \lVert g_{op_h} \rVert_H \, \leq \, \frac{1}{M} \, \lVert u_{h\,0} \rVert_H \leq \frac{1}{M}\, C  \hspace{0.4cm} \forall \hspace{0.2cm} h>0.
     \end{equation}

\vspace{.2cm}

\noindent If we consider $v_h=b \in K_h$ in the inequality (\ref{ec 9}) for $g_{op_h}$, we obtain:

\begin{equation} \label{ec 33}  
  a(u_{h\,g_{op_h}}, b-u_{h\,g_{op_h}}) \geq (g_{op_h}, b-u_{h\,g_{op_h}}) - (q, b-u_{h\,g_{op_h}})_Q,
\end{equation}

\noindent therefore:

\begin{equation} \label{ec 34}  
  a(u_{h\,g_{op_h}}-b, u_{h\,g_{op_h}}-b) \leq (g_{op_h}, u_{h\,g_{op_h}}-b) - (q, u_{h\,g_{op_h}}-b)_Q,
\end{equation}

\noindent and from the coerciveness of the application $a$ we have that $\lVert u_{h\,g_{op_h}}-b \rVert_V \leq C$ and in consequence
$\lVert u_{h\,g_{op_h}} \rVert_V \leq C$.

\noindent Now we can say that there exist $\eta \,\, \in \,\,V$ and $f \,\, \in \,\,H$ such that $u_{h\,g_{op_h}}\rightharpoonup \eta$ in $V$ weak
(in $H$ strong), and $g_{op_h}\rightharpoonup f$ in $H$  weak when $h\rightarrow 0^+$. Then, $\eta{/ \Gamma_1} = b$ and  $\eta \geq 0$ in $\Omega$
i.e., $\eta \in K$.

Let given $v \in K$, there exist $v_h \in K_h$ such that $v_h\rightarrow v$ in $V$ strong when $h\rightarrow 0^+$. Then,
if we consider the variational elliptic inequality (\ref{ec 9}) for $g= g_{op_h}$ we have:

\begin{equation} \label{ec 35}  
  a(u_{h\,g_{op_h}}, v_h) \geq a(u_{h\,g_{op_h}}, u_{h\,g_{op_h}}) + (g_{op_h}, v_h - u_{h\,g_{op_h}}) - (q, v_h-u_{h\,g_{op_h}})_Q.
\end{equation}

\noindent Taking into account that the application $a$ is a lower weak semi-continuous application in $V$ and by pass to the limit when
$h$ goes to zero in (\ref{ec 33}) we obtain that:

\begin{equation*}
 a (\eta, v-\eta) \geq (f, v-\eta) - (q, v-\eta)_Q, \hspace{1cm} \forall \,\,v \in K
\end{equation*}

\noindent and by the uniqueness of the solution of the problem given by the elliptic variational inequality
(\ref{ec 3}), we deduce that $\eta = u_f$ .

Finally, the norm on $H$ is a lower semi-continuous application in the weak topology, then we can prove that:

\begin{equation*}
   J(f)=\frac{1}{2} \lVert u_f \rVert^2_H + \frac{M}{2} \lVert f \rVert^2_H
    \leq \liminf_{h\rightarrow 0^+} J_h(g_{op_h}) \leq \liminf_{h\rightarrow 0^+} J_h(g) = \frac{1}{2} \lim_{h\rightarrow 0^+} \lVert u_{hg} \rVert^2_H + \frac{M}{2} \lVert g \rVert^2_H
\end{equation*}

\begin{equation*}
 = \frac{1}{2} \lVert u_g \rVert^2_H + \frac{M}{2} \lVert g \rVert^2_H = J(g), \hspace{1cm} \forall \,g \in H
\end{equation*}

\noindent and because the uniqueness of the optimal problem (\ref{ec 5}), it results that $f=g_{op}$ and $\eta = u_{g_{op}}.$

Now, if we consider $v=u_{h\,g_{op_h}} \in K_h \subset K$ in the elliptic variational inequality (\ref{ec 3}) for the control $g_{op}$ and we define
$z_h= u_{h\,g_{op_h}} - u_{g_{op}}$, we have that:
\begin{equation*}
  a(z_h,z_h) \leq a(u_{h\,g_{op_h}}, u_{h\,g_{op_h}})- a(u_{h\,g_{op_h}}, u_{g_{op}}) - (g_{op}, u_{h\,g_{op_h}} - u_{g_{op}}) + (q,u_{h\,g_{op_h}} - u_{g_{op}} )_Q,
\end{equation*}

\noindent and by consider $v= \Pi_h (u_{g_{op}}) \in K_h$ for $g= g_{op_h}$ in the inequality (\ref{ec 9})  we obtain:

\begin{equation*}
 a(u_{h\,g_{op_h}}, u_{h\,g_{op_h}}) \leq -(g_{op_h},\Pi_h (u_{g_{op}})- u_{h\,g_{op_h}}) + (q,\Pi_h (u_{g_{op}})- u_{h\,g_{op_h}} )_Q + a(u_{h\,g_{op_h}}, \Pi_h (u_{g_{op}})).
\end{equation*}

\noindent and then by the coerciveness of $a$ we get

\begin{equation*}
 \lambda \, \lVert z_h \rVert^2_V \leq (q,\Pi_h (u_{g_{op}})- u_{g_{op}} )_Q + a(u_{h\,g_{op_h}}, \Pi_h (u_{g_{op}})- u_{g_{op}} )
\end{equation*}
\begin{equation} \label{ec 36} 
+(g_{{op}_h}- g_{op}, u_{h\,g_{op_h}}- u_{g_{op}} ) - (g_{op}, \Pi_h (u_{g_{op}})- u_{g_{op}})
\end{equation}

\noindent When we pass to the limit as $h\rightarrow 0⁺$ in (\ref{ec 34}) and by using the strong convergence of $u_{h\,g_{op_h}}$
to $u_{g_{op}}$ on $H$ and the weak convergence of $g_{op_h}$ to $g_{op}$ on $H$ , we have:

\begin{equation} \label{ec 37} 
  \lim_ {h \rightarrow 0^+} \rVert  u_{g_{op}} - u_{h\,g_{op_h}} \lVert_V =0.
\end{equation}

The strong convergence of the optimal controls $g_{op_h}$ to $g_{op}$ is obtained by using Theorem 3.1 and $g_{op_h} \rightharpoonup g_{op}$ weakly on $H$, i.e.

\begin{equation*}
 J(g_{op}) = \frac{1}{2} \lVert u_{g_{op}} \rVert ^2_H + \frac{M}{2} \lVert g_{op} \rVert ^2_H \leq \liminf_{h \rightarrow 0^+} J_h(g_{op_h})
\end{equation*}

 \begin{equation*}
 \leq \liminf_{h\rightarrow 0^+} J_h(g_{op}) = \liminf_{h\rightarrow 0^+} \frac{1}{2} \lVert u_{g_{op}} \rVert ^2_H + \frac{M}{2} \lVert g_{op} \rVert ^2_H = J(g_{op},)
\end{equation*}
\noindent  then $\lim_{h\rightarrow 0} \lVert g_{op_h}\rVert_H = \lVert g_{op}\rVert_H$ and therefore  $\lim_ {h \rightarrow 0^+} \rVert  g_{{op}_h} - g_{op} \lVert_H =0$.

\end{proof}

\section{Conclusions}

We have proved the convergence of a discrete optimal control and its corresponding
discrete state system governed by a discrete elliptic variational inequality to the
continuous optimal control and its corresponding continuous state system which is also governed by
a continuous elliptic variational inequality by using the finite element method
with Lagrange's triangles of type 1.
Moreover, it is an open problem to obtain the error estimates as a function of the parameter
$h$ of the finite element method.

\section{Acknowledgements}
This paper has been partially sponsored by Project PIP \# 0534 from CONICET-UA, Rosario, Argentina, and AFOSR-SOARD Grant FA9550-14-1-0122.


\end{document}